\newlength{\bibitemsep}\setlength{\bibitemsep}{.2\baselineskip plus .05\baselineskip minus .05\baselineskip}
\newlength{\bibparskip}\setlength{\bibparskip}{0pt}
\let\oldthebibliography\thebibliography
\renewcommand\thebibliography[1]{%
  \oldthebibliography{#1}%
  \setlength{\parskip}{\bibitemsep}%
  \setlength{\itemsep}{\bibparskip}%
}
\theoremstyle{plain}
\newcounter{tmp}
\newtheorem{theorem}{Theorem}
\newtheorem{lemma}[theorem]{Lemma}
\newtheorem{corollary}[theorem]{Corollary}
\theoremstyle{remark}
\newtheorem{observation}{Observation}
\theoremstyle{definition}
\newtheorem{definition}[theorem]{Definition}
\DeclareMathOperator{\Z}{\mathbb{Z}} 
\DeclareMathOperator{\F}{\mathbb{F}} 
\def \O {{\mathcal O}} 
\def \P {{\mathbb P}} 
\DeclareMathOperator{\Div}{\mathsf{Div}}
\DeclareMathOperator{\Supp}{\mathsf{Supp}}
\newcommand{\mf}[1]{\mathfrak{#1}} 
\newcommand{\msf}[1]{\mathsf{#1}}  
\newcommand{\nrm}[2]{\mathsf{N}_{#1}(#2)} 
\begin{document}

\title{Global Field Totients}

\author{Santiago Arango-Piñeros}
\address{Department of Mathematics, Emory University,
Atlanta, GA 30322}
\email{santiago.arango@emory.edu}

\author{Juan Diego Rojas}
\address{Departmento de Mathematicas, Universidad de Los Andes,
Bogotá, Colombia}
\email{jd.rojas14@uniandes.edu.co}

\maketitle


\begin{abstract}
    Using a remainder theorem for valuations of a field, we give a new perspective on the norm function of a global field. We define the Euler totient function of a global field and recover the essential analytical properties of the classical arithmetical function, namely the product formula. In addition, we prove the holomorphicity of the associated zeta function. As an application, we recover the analog of the mean value theorem of Erd\H{o}s, Dressler, and Bateman via the Weiner-Ikehara theorem. 
\end{abstract}


\section{Introduction}
\label{sec:intro}
\subsection{Euler's function} The classical arithmetical function of Euler, denoted by $\varphi$, is defined in group theoretic terms as the size of the group of units modulo $n$. The Chinese remainder theorem implies that $\varphi(n)$ is a multiplicative arithmetical function. This means that for every pair of relatively prime positive integers $a$ and $b$, $\varphi(ab) = \varphi(a)\varphi(b)$. Since for every prime number $p$ and every positive integer $r$ the ring $\Z/p^r\Z$ is local with maximal ideal $p\Z/p^r\Z$, the units modulo $p^r$ correspond to the complement of $p\Z/p^r\Z$ inside $\Z/p^r\Z$. This implies $\varphi(p^r) = p^r(1-1/p)$. The unique factorization property of the integers yields the {\it product formula}
\begin{align*}
    \varphi(n) = n\prod_{p\mid n}\left(1-\frac{1}{p}\right) \, .
\end{align*}

The product formula is the soul of several results in classical analytic number theory concerning the Euler function. For instance, the mean value theorem of Erd\H{os}, Dressler, and Bateman, which we proceed to recall. \\

A positive integer $n$ is called a {\it totient} if belongs to the image of $\varphi$. The {\it totient multiplicity} of an integer $n$, denoted by $t(n)$, is the size of the fiber $\varphi^{-1}(n)$. Since $\varphi(n)$ goes to infinity with $n$, $t(n)$ is always finite. The study of the distribution of totient numbers is an alluring research problem studied by many. In particular, the calculation of the mean value of the arithmetical function $t(n)$ was first approached by Erd\H{o}s in \cite{erdos1945some}. Using techniques from probabilistic number theory, he showed the existence of the limit
\begin{align*}
    \lim_{N\to\infty} \, \dfrac{1}{N}\sum_{n=1}^N t(n) = \alpha \, .
\end{align*}
Twenty five years later, Dressler \cite{dressler1970density} explicitly calculated the limit, obtaining the remarkable constant
\begin{align*}
    \alpha = \zeta(2)\zeta(3)/\zeta(6) \,,
\end{align*}
where $\zeta(s)$ is, of course, the Riemann zeta function. Bateman \cite{bateman1972distribution} gave a second proof of this via the Wiener-Ikehara theorem. We summarize this joint effort in the following theorem. 

\begin{theorem}[Erd\H{o}s, Dressler, Bateman] Let $t(n)$ denote the totient multiplicity arithmetical function, and let $\zeta(s)$ denote the Riemann zeta function.  Then, the mean value of $t(n)$ is given by
\begin{align*}
    \lim_{N\to \infty} \dfrac{1}{N} \sum_{n=1}^N t(n) = \zeta(2)\zeta(3)/\zeta(6) \, .
\end{align*}
\label{thm:erdosdresslerbateman}
\end{theorem}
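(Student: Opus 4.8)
The plan is to follow Bateman's approach via the Wiener–Ikehara Tauberian theorem, which reduces the mean-value statement to understanding the analytic behavior of the Dirichlet series $F(s) = \sum_{n\ge 1} t(n) n^{-s}$ near $s=1$. First I would observe that $\sum_{n=1}^N t(n) = \#\{m \ge 1 : \varphi(m) \le N\}$, so the partial sums of $t$ count integers $m$ with $\varphi(m)$ small; rewriting $F(s)$ by swapping the order of summation gives $F(s) = \sum_{m\ge 1} \varphi(m)^{-s}$. Thus the whole problem is to analyze the \emph{Euler totient zeta function} $\sum_{m\ge 1}\varphi(m)^{-s}$, locate its rightmost singularity, and compute the residue there; Wiener–Ikehara then converts a simple pole at $s=1$ with residue $\alpha$ into the asymptotic $\sum_{n\le N} t(n) \sim \alpha N$.

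The key computational step is to expand $\sum_{m\ge 1}\varphi(m)^{-s}$ as an Euler-type product using multiplicativity of $\varphi$. Since $\varphi$ is multiplicative and $\varphi(p^r) = p^{r-1}(p-1)$, one gets
\begin{align*}
    \sum_{m\ge 1}\frac{1}{\varphi(m)^s} = \prod_{p}\left(1 + \sum_{r\ge 1}\frac{1}{p^{(r-1)s}(p-1)^s}\right) = \prod_p\left(1 + \frac{1}{(p-1)^s}\cdot\frac{1}{1 - p^{-s}}\right).
\end{align*}
The next step is to factor out the dominant part: writing $\frac{1}{(p-1)^s} = \frac{1}{p^s}\bigl(1 - \tfrac1p\bigr)^{-s}$ and expanding, one checks that the local factor at $p$ equals $\bigl(1 - p^{-s}\bigr)^{-1}$ times a factor of the form $1 + p^{-2s} + O(p^{-3s})$ (uniformly for $s$ in a half-plane $\Re s > 1/2 + \delta$). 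Hence $\sum_m \varphi(m)^{-s} = \zeta(s)\cdot G(s)$ where $G(s) = \prod_p\bigl(1 + p^{-2s} + \cdots\bigr)$ converges absolutely and is holomorphic and nonvanishing for $\Re s > 1/2$. The arithmetic identity $\prod_p (1 + p^{-2s} + p^{-3s} - p^{-5s} - \cdots)$ is precisely the Euler product of $\zeta(2s)\zeta(3s)/\zeta(6s)$; verifying this factorization of the local factors — i.e. that the residual product $G(s)$ is exactly $\zeta(2s)\zeta(3s)/\zeta(6s)$ up to something holomorphic past $\Re s = 1$ — is the heart of the matter, and it is where Dressler's constant $\zeta(2)\zeta(3)/\zeta(6)$ emerges as $G(1)$.

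With the factorization $\sum_m \varphi(m)^{-s} = \zeta(s)\,\zeta(2s)\zeta(3s)/\zeta(6s)\cdot H(s)$ with $H$ holomorphic and nonzero on $\Re s \ge 1$, the rightmost singularity is the simple pole of $\zeta(s)$ at $s=1$, with residue $\zeta(2)\zeta(3)/\zeta(6)\cdot H(1)$; one then confirms $H(1)=1$. Finally I would invoke Wiener–Ikehara (legitimate since $t(n)\ge 0$ and the Dirichlet series extends to $\Re s = 1$ minus a simple pole) to conclude $\sum_{n\le N} t(n)\sim \zeta(2)\zeta(3)/\zeta(6)\cdot N$. The main obstacle is the bookkeeping in the second paragraph: one must track the local Euler factors carefully enough to see both that the "error'' product $H(s)$ genuinely extends holomorphically slightly past $\Re s = 1$ (so that Wiener–Ikehara applies) and that its value at $s=1$ is exactly $1$, so that no spurious constant contaminates Dressler's $\zeta(2)\zeta(3)/\zeta(6)$.
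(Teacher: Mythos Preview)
Your strategy is exactly Bateman's, and it is the one the paper follows (in its general global-field form, Theorem~D, which specializes to this statement at $K=\Q$): rewrite $\sum t(n)n^{-s}=\sum_m\varphi(m)^{-s}$, exploit multiplicativity to get an Euler product, peel off $\zeta(s)$, show the cofactor is holomorphic across $\sigma=1$, and apply Wiener--Ikehara. So the skeleton is correct.

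Where you diverge from the paper---and where the argument gets shaky---is in the handling of the cofactor. After factoring out $\zeta(s)$ you are left with
\[
f(s)=\prod_p\bigl(1+(p-1)^{-s}-p^{-s}\bigr),
\]
and the paper stops here: Lemma~\ref{lemma:fprod} shows directly, via the mean-value bound $|(p-1)^{-s}-p^{-s}|\le |s|(p-1)^{-\sigma-1}$, that $f$ is holomorphic on $\sigma>0$, and then one simply \emph{evaluates} $f(1)$ by simplifying each local factor:
\[
1+\frac{1}{p-1}-\frac{1}{p}=\frac{p^2-p+1}{p(p-1)}=\frac{1+p^{-3}}{1-p^{-2}},
\]
which is the Euler factor of $\zeta(2)\zeta(3)/\zeta(6)$. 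No further factorization is needed.

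Your attempt to force a factorization $f(s)=\zeta(2s)\zeta(3s)/\zeta(6s)\cdot H(s)$ valid for all $s$ is a detour, and the supporting claims are not right: the local factor $1+(p-1)^{-s}-p^{-s}$ expands as $1+s\,p^{-s-1}+O(p^{-s-2})$, not $1+p^{-2s}+O(p^{-3s})$, so $f(s)$ and $\zeta(2s)\zeta(3s)/\zeta(6s)$ do \emph{not} agree as functions (they match only at $s=1$). Consequently your $H(s)$ is just $f(s)\zeta(6s)/(\zeta(2s)\zeta(3s))$, and ``confirming $H(1)=1$'' is exactly the computation of $f(1)$ you were trying to avoid---the step is circular. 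Drop the $\zeta(2s)\zeta(3s)/\zeta(6s)$ layer and compute $f(1)$ directly as above; then the proof is complete and matches the paper's.
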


\subsection{Results} The core of this manuscript is a theorem for discrete valuation rings which is reminiscent of the Chinese remainder theorem. The number theoretic results will follow smoothly from this algebraic fact. We refer the reader to Section \ref{sec:dvrrthm} for the complete statement. We where unable to find this result in the literature, so we provide a proof here.

\begingroup
\setcounter{tmp}{\value{theorem}}
\setcounter{theorem}{0} 
\renewcommand\thetheorem{\Alph{theorem}}
\begin{restatable}[DVR remainder theorem]{theorem}{dvrrthm}
    \label{thm:dvrCRT}
    Let $L$ be a field. 
    If $V$ and $W$ are finite disjoint multisets of discrete valuations of $L$, then 
    \begin{align*}
        (R_V \cap R_W)/(I_V \cap I_W) \cong (R_V/I_V) \times (R_W/I_W) \, .
    \end{align*}
\end{restatable}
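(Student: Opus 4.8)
The plan is to establish the isomorphism through the first isomorphism theorem applied to the natural ``diagonal'' reduction map, so that the only substantive point becomes surjectivity. Write $\mathcal{O}_v \subseteq L$ for the valuation ring of a discrete valuation $v$ and $\mathfrak{m}_v$ for its maximal ideal; recall from Section~\ref{sec:dvrrthm} that $R_V = \bigcap_{v \in \mathrm{supp}(V)} \mathcal{O}_v$ and that $I_V$ is the ideal of those $x \in R_V$ with $v(x) \ge r_v$ for every $v$ in $\mathrm{supp}(V)$, where $r_v$ is the multiplicity of $v$ in $V$. Since $V$ and $W$ are disjoint multisets, their supports are disjoint, and $R_V \cap R_W$ is a subring of both $R_V$ and $R_W$ (in fact $R_V \cap R_W = R_{V \sqcup W}$ and $I_V \cap I_W = I_{V \sqcup W}$, which is why the statement deserves to be read as a remainder theorem for the combined multiset). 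Define the ring homomorphism
\[
\phi \colon R_V \cap R_W \longrightarrow (R_V/I_V) \times (R_W/I_W), \qquad x \longmapsto (x + I_V,\; x + I_W).
\]

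First I would identify the kernel: because $I_V \subseteq R_V$ and $I_W \subseteq R_W$, one has $I_V \cap I_W \subseteq R_V \cap R_W$, and an element $x$ of the domain lies in $\ker \phi$ exactly when $x \in I_V$ and $x \in I_W$, so $\ker \phi = I_V \cap I_W$ and $\phi$ descends to an injection $(R_V \cap R_W)/(I_V \cap I_W) \hookrightarrow (R_V/I_V) \times (R_W/I_W)$. For surjectivity, enumerate $\mathrm{supp}(V) = \{v_1, \dots, v_m\}$ with multiplicities $r_1, \dots, r_m$ and $\mathrm{supp}(W) = \{w_1, \dots, w_k\}$ with multiplicities $s_1, \dots, s_k$; disjointness guarantees that these $m+k$ discrete valuations are pairwise distinct, hence pairwise independent, since distinct rank-one valuation rings of a field are incomparable. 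Given an arbitrary target class, lift it to a pair $(a, b) \in R_V \times R_W$; by the weak approximation theorem for independent valuations there is $x \in L$ with $v_i(x - a) \ge r_i$ for all $i$ and $w_j(x - b) \ge s_j$ for all $j$. The ultrametric inequality together with $v_i(a) \ge 0$ and $w_j(b) \ge 0$ forces $v_i(x) \ge 0$ and $w_j(x) \ge 0$, so $x \in R_V \cap R_W$; moreover $x - a \in I_V$ and $x - b \in I_W$, whence $\phi(x) = (a + I_V,\; b + I_W)$. This proves surjectivity, and the first isomorphism theorem finishes the argument.

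The main obstacle is this surjectivity step, and it is essentially a matter of invoking the right approximation statement: one must know that distinct discrete valuations of $L$ are independent, so that an element can be prescribed simultaneously modulo prescribed powers of several maximal ideals — and this is the precise place where the disjointness hypothesis enters. Two further points need care. The approximant $x$ is controlled only at the finitely many valuations appearing in the two supports, and one must check this suffices for $x \in R_V \cap R_W$ (it does, precisely because both multisets are finite and membership in $R_V \cap R_W$ constrains only those valuations). And one should confirm that the usual weak approximation theorem, often phrased for absolute values, yields the integral version with the exponents $r_i, s_j$ that is needed here. If a self-contained treatment is preferred, the approximation input can be replaced by first producing, for each $v_i$, a uniformizer that is a unit for every other valuation in $\mathrm{supp}(V) \cup \mathrm{supp}(W)$ and then assembling $x$ by hand; I expect the paper's Section~\ref{sec:dvrrthm} to record such a lemma as a preliminary.
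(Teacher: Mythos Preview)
Your proof is correct. Both you and the paper set up the same diagonal map $\phi$ and reduce everything to surjectivity; the approaches diverge only in how that step is carried out. You invoke weak approximation for the pairwise independent valuations in $\mathrm{supp}(V)\cup\mathrm{supp}(W)$, producing $x$ close to $a$ at each $v_i$ and to $b$ at each $w_j$. The paper instead builds an explicit preimage by hand: given $(f_1+I_V,\,f_2+I_W)$ it forms $t_V=\prod_{v\in V}t_v$ and $t_W=\prod_{w\in W}t_w$ from fixed uniformizers, chooses exponents $N=\max_{v\in V}\{\mu(v),\,-v(f_2)\}$ and $M=\max_{w\in W}\{\mu(w),\,-w(f_1)\}$, and sets $f=f_1\,t_W^{M}+f_2\,t_V^{N}$. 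This is precisely the ``assemble $x$ by hand from uniformizers'' alternative you sketched in your closing paragraph, although the paper does not first normalize the $t_v$ to be units at the other places and no preliminary lemma of the sort you expected is recorded. Your route is cleaner and makes the role of the disjointness hypothesis transparent (it is exactly what furnishes independence for approximation), at the cost of citing weak approximation as a black box; the paper's route is self-contained and yields an explicit formula, at the cost of a more delicate valuation-chasing verification.
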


The main result of this note is the correct definition of the Euler function $\varphi_{K}$ attached to a global field $K$. We claim it is the correct one because the expected analytic properties follow comfortably from it. In particular, we recover the product formula. We refer the reader to Subsection \ref{subsec:notation} to consult the unrevealed notation. 

\begin{restatable}[Product formula]{theorem}{gfprodformula}
    \label{thm:prodformula}
     Let $K$ be a global field with Euler function $\varphi_K$ and norm $\msf{N}_K$. Then, for every nonzero effective divisor $D\in \Div^+(X)$,
     \begin{align*}
         \varphi_K(D) = \nrm{K}{D}\prod_{P\leq D}(1-\nrm{K}{P}^{-1}) \, .
     \end{align*}
\end{restatable}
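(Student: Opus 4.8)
The plan is to reproduce the classical derivation of the product formula, with Theorem~\ref{thm:dvrCRT} playing the role that the Chinese remainder theorem plays for $\varphi$ over $\Z$. Write the effective divisor as $D = \sum_{P\leq D} n_P\, P$ with $n_P = \ord_P(D)\geq 1$, the sum running over the finitely many places $P$ in the support of $D$. Each such $P$ gives a discrete valuation $\ord_P$ of $K$, and distinct places give distinct---hence disjoint as multisets---valuations, so the multiset $V_D$ in which $\ord_P$ occurs with multiplicity $n_P$ is the disjoint union of the singleton-support multisets $V_{n_P P}$; correspondingly $R_{V_D} = \bigcap_P R_{V_{n_P P}}$ and $I_{V_D} = \bigcap_P I_{V_{n_P P}}$. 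Inducting on the size of the support and applying Theorem~\ref{thm:dvrCRT} to split off one prime at a time yields a ring isomorphism
\begin{align*}
    R_{V_D}/I_{V_D} \;\cong\; \prod_{P\leq D} R_{V_{n_P P}}/I_{V_{n_P P}} \, .
\end{align*}

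First I would use this isomorphism to dispatch the norm: using that $\nrm{K}{D}$ equals the cardinality of the finite ring $R_{V_D}/I_{V_D}$ (as recalled in Subsection~\ref{subsec:notation}), the decomposition above gives complete multiplicativity $\nrm{K}{D} = \prod_{P\leq D}\nrm{K}{n_P P}$, and $\nrm{K}{n_P P} = \nrm{K}{P}^{n_P}$ will follow from the local count below. Passing to unit groups on both sides and using $(A\times B)^\times \cong A^\times\times B^\times$, we get the divisorial multiplicativity of the totient:
\begin{align*}
    \varphi_K(D) \;=\; \bigl|(R_{V_D}/I_{V_D})^\times\bigr| \;=\; \prod_{P\leq D}\bigl|(R_{V_{n_P P}}/I_{V_{n_P P}})^\times\bigr| \;=\; \prod_{P\leq D}\varphi_K(n_P P) \, .
\end{align*}

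It then remains to evaluate $\varphi_K(nP)$ for a single place $P$ and $n\geq 1$. Here $R_{V_{nP}}/I_{V_{nP}}$ is the quotient of the discrete valuation ring $\O_P$ at $P$ by the $n$-th power of its maximal ideal $\mf{m}_P$, hence a local Artinian ring with maximal ideal $\mf{m}_P/\mf{m}_P^n$ and residue field $k(P)$ of cardinality $\nrm{K}{P}$. Filtering by the powers of $\mf{m}_P$ and using that each graded piece $\mf{m}_P^i/\mf{m}_P^{i+1}$ is a one-dimensional $k(P)$-vector space gives $|R_{V_{nP}}/I_{V_{nP}}| = \nrm{K}{P}^n$ and $|\mf{m}_P/\mf{m}_P^n| = \nrm{K}{P}^{n-1}$; since the units of a local ring are exactly the complement of its maximal ideal,
\begin{align*}
    \varphi_K(nP) \;=\; \nrm{K}{P}^n - \nrm{K}{P}^{n-1} \;=\; \nrm{K}{P}^n\bigl(1 - \nrm{K}{P}^{-1}\bigr) \, .
\end{align*}
Substituting into the multiplicativity relation gives $\varphi_K(D) = \prod_{P\leq D}\nrm{K}{P}^{n_P}\bigl(1-\nrm{K}{P}^{-1}\bigr) = \nrm{K}{D}\prod_{P\leq D}\bigl(1-\nrm{K}{P}^{-1}\bigr)$, which is the claimed formula.

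The content-bearing step is the local count $\varphi_K(nP) = \nrm{K}{P}^n - \nrm{K}{P}^{n-1}$, which relies on the structure of $\O_P/\mf{m}_P^n$ as a chain ring with graded pieces $\cong k(P)$; everything else is formal. The only real friction I anticipate is bookkeeping: making sure that the finite ring $R_V/I_V$ of Theorem~\ref{thm:dvrCRT}, for $V$ the multiset attached to a divisor, is literally the ring ``modulo $D$'' used to define $\varphi_K$ and $\msf{N}_K$ in Subsection~\ref{subsec:notation}, and that splitting off one prime power at a time in the induction yields disjoint multisets at every stage (it does, since the supports of the pieces are pairwise disjoint).
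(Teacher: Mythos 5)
Your proposal is correct and follows essentially the same route as the paper: reduce to prime powers via the ring decomposition coming from Theorem~\ref{thm:dvrCRT} (this is exactly Corollary~\ref{cor:main}), then count units in the local ring $\O_{X,P}/\mf{m}_P^n$ as the complement of its maximal ideal, giving $\varphi_K(nP)=\nrm{K}{P}^n-\nrm{K}{P}^{n-1}$. The paper packages the local cardinality count as a short exact sequence rather than your $\mf{m}_P$-adic filtration, but this is a cosmetic difference.
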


Following Bateman, we prove the holomorphicity of the {\it totient zeta function} $T_K(s)$ attached to $K$. Furthermore, we show that this function is intimately related to the Weil zeta function $\zeta_K(s)$ of the field.
\begin{restatable}{theorem}{gftotientzeta}
    \label{thm:totientzeta}
    Let $K$ be a global field. The totient zeta function $T_K(s)$ attached to $K$ converges absolutely and uniformly in the open half plane $\sigma > 1$. Furthermore, one has the identity
    \begin{align*}
        \dfrac{T_K(s)}{\zeta_K(s)} =  f_K(s) \colonequals \prod_{P} \left[1 + (\nrm{K}{P}-1)^{-s} - \nrm{K}{P}^{-s}\right]\, , 
    \end{align*}
    where the product ranges over all closed points $P \in X$ and $f_K(s)$ is holomorphic on the open half plane $\sigma > 0$.
\end{restatable}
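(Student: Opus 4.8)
The plan is to realize the totient zeta function $T_K(s) = \sum_{D \in \Div^+(X)} \varphi_K(D)^{-s}$ (equivalently $\sum_{n \geq 1} t_K(n) n^{-s}$, where $t_K(n) = \#\varphi_K^{-1}(n)$) as an Euler product over the closed points of $X$, and to divide it factor by factor against the Euler product $\zeta_K(s) = \prod_P (1 - \nrm{K}{P}^{-s})^{-1}$. The first ingredient is the multiplicativity of $\varphi_K$: if $D$ and $E$ are effective divisors with disjoint support, then the DVR remainder theorem (Theorem~\ref{thm:dvrCRT}) identifies the unit group computing $\varphi_K(D + E)$ with the product of those computing $\varphi_K(D)$ and $\varphi_K(E)$, so that $\varphi_K(D+E) = \varphi_K(D)\varphi_K(E)$. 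Since $\Div^+(X)$ is the free commutative monoid on the closed points of $X$, multiplicativity gives, for every finite set $S$ of closed points,
\begin{align*}
    \sum_{\Supp D \subseteq S} \varphi_K(D)^{-s} = \prod_{P \in S}\Big( \sum_{r \geq 0} \varphi_K(rP)^{-s} \Big).
\end{align*}
Writing $q_P \colonequals \nrm{K}{P}$ and using that $P$ is the only closed point $\leq rP$, the product formula (Theorem~\ref{thm:prodformula}) gives $\varphi_K(0) = 1$ and $\varphi_K(rP) = q_P^{r}(1 - q_P^{-1}) = q_P^{r-1}(q_P - 1)$ for $r \geq 1$, so each local factor is a geometric series:
\begin{align*}
    \sum_{r \geq 0} \varphi_K(rP)^{-s} = 1 + (q_P - 1)^{-s} \sum_{r \geq 1} q_P^{-(r-1)s} = 1 + \frac{(q_P - 1)^{-s}}{1 - q_P^{-s}}.
\end{align*}

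To establish convergence of $T_K$ I would run Euler's argument at $s = \sigma > 1$: the local factors above are positive, and $\sum_P (q_P - 1)^{-\sigma}(1 - q_P^{-\sigma})^{-1}$ converges because its terms are asymptotic to $q_P^{-\sigma}$ and $\sum_P q_P^{-\sigma}$ converges for $\sigma > 1$ (the same range in which the Euler product for $\zeta_K$ converges absolutely). Hence the partial products — which by multiplicativity are the partial sums $\sum_{\Supp D \subseteq S} \varphi_K(D)^{-\sigma}$ — are uniformly bounded, so $\sum_D \varphi_K(D)^{-\sigma} < \infty$ for $\sigma > 1$. Since $|\varphi_K(D)^{-s}| = \varphi_K(D)^{-\sigma}$, the Weierstrass $M$-test then yields absolute convergence of $T_K(s)$, uniform on each half-plane $\sigma \geq 1 + \delta$, together with the Euler product $T_K(s) = \prod_P \big( 1 + (q_P - 1)^{-s}(1 - q_P^{-s})^{-1} \big)$ for $\sigma > 1$. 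Multiplying this, factor by factor, by the Euler factor $1 - q_P^{-s}$ of $\zeta_K(s)$ clears the denominators and gives, for $\sigma > 1$,
\begin{align*}
    \frac{T_K(s)}{\zeta_K(s)} = \prod_P \big[ (1 - q_P^{-s}) + (q_P - 1)^{-s} \big] = \prod_P \big[ 1 + (q_P - 1)^{-s} - q_P^{-s} \big] = f_K(s).
\end{align*}

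It remains to show that $f_K$ is holomorphic on $\sigma > 0$, directly from its product. Write the general factor as $1 + a_P(s)$ with $a_P(s) = (q_P - 1)^{-s} - q_P^{-s} = q_P^{-s}\big[ (1 - q_P^{-1})^{-s} - 1 \big]$. Since $(s, t) \mapsto (1 - t)^{-s} - 1$ is holomorphic near $t = 0$ and vanishes at $t = 0$, for each compact $\mathcal{K} \subset \{\sigma > 0\}$ there is a constant $C_{\mathcal{K}}$ with $|(1 - t)^{-s} - 1| \leq C_{\mathcal{K}} |t|$ for all $s \in \mathcal{K}$ and $|t| \leq 1/2$; as $q_P^{-1} \leq 1/2$ always, this gives $|a_P(s)| \leq C_{\mathcal{K}}\, q_P^{-\sigma - 1}$ for every $P$. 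Because $\sum_P q_P^{-\sigma - 1}$ converges for $\sigma > 0$ (its exponent exceeds $1$), the product $\prod_P (1 + a_P(s))$ converges absolutely and uniformly on $\mathcal{K}$, so $f_K$ is holomorphic on $\sigma > 0$ by the standard theorem on infinite products of holomorphic functions. The one substantive point in the whole argument is precisely this tail estimate: one must exploit the cancellation $(q_P - 1)^{-s} - q_P^{-s} = O(q_P^{-\sigma - 1})$, a genuine gain of one power of $q_P$ over the individual summands $q_P^{-\sigma}$, since that is exactly what pushes the domain of convergence of $f_K$ from $\sigma > 1$ out to $\sigma > 0$; everything else is routine bookkeeping with absolutely convergent series and Euler products.
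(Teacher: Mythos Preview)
Your proof is correct and follows essentially the same route as the paper: multiplicativity and the product formula give the local Euler factors $1 + (q_P-1)^{-s}(1-q_P^{-s})^{-1}$, factoring out $\zeta_K$ yields $f_K$, and the holomorphic extension of $f_K$ to $\sigma>0$ rests on exactly the cancellation $(q_P-1)^{-s}-q_P^{-s}=O(q_P^{-\sigma-1})$ you isolate. The only cosmetic difference is that the paper obtains this bound via the integral representation $\big|(x-1)^{-s}-x^{-s}\big|=\big|s\int_{x-1}^{x} z^{-s-1}\,dz\big|\le |s|\,(x-1)^{-\sigma-1}$, while you use a Taylor-type estimate on $(1-t)^{-s}-1$; both are the same mean-value inequality in different clothing.
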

To conclude, we apply the Weiner-Ikehara theorem to the totient zeta function of $K$ and generalize Theorem \ref{thm:erdosdresslerbateman} to the global field case.

\newpage
\begin{restatable}[Mean value of global field totients]{theorem}{gftotients}
    Let $K$ be a global field, and let $t_K(n)$ denote the totient multiplicity arithmetical function of $K$. Denote by $\rho_K$ the residue of the Weil zeta function $\zeta_K(s)$ at $s=1$. Then, the mean value of $t_K(n)$
    \begin{align*}
        \lim_{N\to \infty} \dfrac{1}{N} \, \sum_{n=1}^N t_K(n) \, = \, \dfrac{\rho_K \, \zeta_K(2)\zeta_K(3)}{\zeta_K(6)} \, .
    \end{align*}
    \label{thm:1}
\end{restatable}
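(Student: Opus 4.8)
The plan is to recognize $\sum_{n=1}^N t_K(n)$ as the summatory function of the totient zeta series $T_K(s)$ and to read off its leading term by a Tauberian argument. From the definition of the totient multiplicity,
\[
  \sum_{n=1}^N t_K(n) \;=\; \#\{\,D\in\Div^+(X) : \varphi_K(D)\le N\,\},
  \qquad
  T_K(s) \;=\; \sum_{n\ge 1} t_K(n)\,n^{-s} \;=\; \sum_{D\in\Div^+(X)} \varphi_K(D)^{-s},
\]
so $T_K$ is an ordinary Dirichlet series with nonnegative coefficients, which by Theorem \ref{thm:totientzeta} converges absolutely for $\sigma>1$. To invoke the Wiener--Ikehara theorem it suffices to continue $T_K$ up to the line $\sigma=1$ with a single simple pole at $s=1$ and to compute its residue there.

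For this I would use the factorization $T_K(s)=\zeta_K(s)\,f_K(s)$ supplied by Theorem \ref{thm:totientzeta}, in which $f_K$ is holomorphic on $\sigma>0$. One checks that $f_K(1)\ne 0$: the local factor at a closed point $P$ is $1+\bigl(\nrm{K}{P}(\nrm{K}{P}-1)\bigr)^{-1}>0$, and $\sum_P \nrm{K}{P}^{-2}<\infty$ since $\zeta_K(2)$ converges, so the Euler product for $f_K(1)$ converges to a positive real number. Combined with the fact that $\zeta_K$ is holomorphic on $\sigma\ge 1$ except for a simple pole at $s=1$ of residue $\rho_K$ — here one invokes the classical nonvanishing of $\zeta_K$ on the line $\sigma=1$ in the number field case, and the explicit rational expression for the Weil zeta function in the function field case — the product $T_K$ has exactly one pole on $\sigma\ge 1$, a simple one at $s=1$, with residue $\rho_K\,f_K(1)$.

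The arithmetic heart of the argument is the evaluation $f_K(1)=\zeta_K(2)\zeta_K(3)/\zeta_K(6)$. Setting $x=\nrm{K}{P}$, the local factor of $f_K$ at $s=1$ simplifies as
\[
  1+\frac{1}{x-1}-\frac{1}{x}
  \;=\; \frac{x^2-x+1}{x^2-x}
  \;=\; \frac{1-x^{-6}}{(1-x^{-2})(1-x^{-3})},
\]
which is precisely the local factor of $\zeta_K(2)\zeta_K(3)/\zeta_K(6)$ at $P$; taking the product over all closed points $P\in X$ gives the claimed identity (and reconfirms $f_K(1)\ne 0$). Hence $T_K$ has residue $\rho_K\,\zeta_K(2)\zeta_K(3)/\zeta_K(6)$ at $s=1$.

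Finally I would apply the Wiener--Ikehara theorem to $T_K$: a Dirichlet series with nonnegative coefficients whose continuation to $\sigma\ge 1$ has only a simple pole at $s=1$ with residue $c$ has summatory function asymptotic to $cN$. This yields $\sum_{n=1}^N t_K(n)\sim \rho_K\,\zeta_K(2)\zeta_K(3)\,\zeta_K(6)^{-1}\,N$, and dividing by $N$ gives the stated mean value; for $K=\Q$ one recovers Theorem \ref{thm:erdosdresslerbateman}. The step I expect to demand the most care is the analytic input to the Tauberian theorem: in the number field case it is the standard boundary nonvanishing of the Dedekind zeta function, but in the function field case $\zeta_K$ is a rational function of $q^{-s}$ and therefore carries poles at every point of $\sigma=1$ congruent to $1$ modulo $2\pi i/\log q$, so one must instead appeal to the form of the Wiener--Ikehara (or Delange) theorem adapted to such periodic pole configurations — or extract the coefficient asymptotics directly from the rational shape of $T_K$ — and reconciling those additional contributions with the stated limit is the genuine obstacle.
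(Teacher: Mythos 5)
Your argument is the same as the paper's: factor $T_K(s)=\zeta_K(s)f_K(s)$ via Theorem \ref{thm:totientzeta}, evaluate $f_K(1)$, and feed the resulting simple pole at $s=1$ into Wiener--Ikehara. Your explicit local computation
$1+\tfrac{1}{x-1}-\tfrac{1}{x}=\tfrac{x^2-x+1}{x^2-x}=\tfrac{1-x^{-6}}{(1-x^{-2})(1-x^{-3})}$
is correct and usefully supplies the step the paper dismisses as ``a direct calculation.'' One small simplification: you do not need the nonvanishing of $\zeta_K$ on the line $\sigma=1$; since $T_K$ is a \emph{product} $\zeta_K f_K$ rather than a quotient, all that is required is that $\zeta_K(s)-\rho_K/(s-1)$ extend continuously to $\sigma\geq 1$, which is the standard continuation of the Dedekind zeta function.

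The obstacle you flag in the function field case is genuine, and it is worth saying plainly that it is a gap in the paper's own proof, not just in your sketch. The paper asserts that $T_K(s)$ ``is meromorphic on $\sigma\geq 1$ and has a unique (simple) pole at $s=1$,'' but when $K$ is a function field over $\F_q$ one has $\zeta_K(s)=P(q^{-s})/\bigl((1-q^{-s})(1-q^{1-s})\bigr)$, which has simple poles at every $s=1+2\pi i k/\log q$, $k\in\Z$. The factor $f_K(s)$ cannot rescue this: its Euler product converges absolutely on $\sigma>0$ and no local factor vanishes on $\sigma=1$ (for $\nrm{K}{P}=2$ the factor is $2-2^{-s}$, of modulus at least $3/2$ there; for $\nrm{K}{P}=x\geq 3$ its modulus is at least $1-(x-1)^{-1}-x^{-1}>0$), so $f_K$ is nonzero on $\sigma=1$ and $T_K$ really does inherit the full arithmetic progression of poles. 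Hence the hypothesis of Theorem \ref{thm:wiener-ikehara} --- continuity of $T_K(s)-\alpha/(s-1)$ on all of $\sigma\geq 1$ --- fails, and one must either use a Tauberian theorem adapted to periodic pole configurations (where the extra poles typically produce bounded oscillations in $\frac{1}{N}\sum_{n\leq N}t_K(n)$, so even the truth of the stated limit needs justification) or argue directly from the rational structure of $T_K$ as you suggest. As written, both your proposal and the paper's proof establish the theorem only in the number field case.
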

\endgroup
\setcounter{theorem}{\thetmp}

\subsection{Notation}
\label{subsec:notation}
First of all, for a perfect field $k$, a $k$-{\it variety} is a separated scheme of finite type over $k$. We use the letter $q$ to denote a fixed prime power, and the ground field of the varieties considered is always a finite field with $q$ elements $\F_q$. The letter $K$ denotes a {\it global field}, and $X$ denotes the {\it associated scheme}. By this we mean the following:
\begin{itemize}
    \item If $K$ is a {\it number field}, $X$ is the prime spectrum of the ring of integers $\O_K$.
    \item If $K$ is a {\it global function field} over $\F_q$, $X$ is the smooth projective model corresponding to $X$ via resolution of singularities. Recall that $X$ is a smooth integral projective $\F_q$-variety of dimension $1$. 
\end{itemize}
$\Div(X)$ is the group of Weil divisors of $X$ and we denote by $\Div^+(X)$ the semigroup of effective divisors. We restrict the use of $P$ for irreducible divisors. For every irreducible divisor $P$, $\nu_P\colon \Div(X) \to \Z$ is the order function, which extends the discrete valuation of the stalk $\O_{X,P}$. The support of a divisor $D\in\Div(X)$, denoted by $\Supp D$, is the finite set of irreducible divisors $P$ such that $\nu_P(D) \neq 0$. We abbreviate the statement $P\in \Supp D$ by $P \leq D$. 

\begin{definition}[Arithmetic functions on divisors]
\hfill 
\begin{enumerate}
    \item Two effective divisors $D_1, D_2$ are {\it relatively prime} if $(\Supp D_1)\cap(\Supp D_2) = \emptyset$.
    
    \item A function $f\colon\Div^+(X)\to \Z$ is {\it completely multiplicative} if $f(D_1 + D_2) = f(D_1)f(D_2)$ for every pair of divisors $D_1,D_2$.
    
    \item A function $f\colon\Div^+(X)\to \Z$ is {\it multiplicative} if $f(D_1 + D_2) = f(D_1)f(D_2)$ for every pair of relatively prime divisors $D_1,D_2$.
\end{enumerate}
\end{definition}

We use the letter $s$ to denote a complex variable. As is customary in analytic number theory, $\sigma$ and $t$ denote the real and imaginary part of $s$ respectively. Finally, we denote by $\zeta_{K}(s)$ the Weil zeta function of $X$.

\medskip
\section{A remainder theorem for discrete valuations}
\label{sec:dvrrthm}

The motivation for this section comes from the aspiration of unifying the definition of the norm (and Euler function) of a global field. This will become clear after Section \ref{sec:gfef}, although we consider the result to be interesting on its own. We follow the definitions and conventions in Altman and Kleiman's book \cite[Chapter 23]{altman2013term}. In particular, all discrete valuations are surjective.

\medskip
In this section $L$ will denote a field. For a discrete valuation $v$ of $L$, $R_v \colonequals \{x \in L \colon v(x) \geq 0\}$ denotes the corresponding discrete valuation ring, and $\mf{m}_v \colonequals \{x \in L \colon v(x) > 0\}$ denotes its maximal ideal. For every $v$, we fix a uniformizer $t_v$ at $v$.

\medskip
A multiset is a pair $(A,\mu)$ where $A$ is a set and $\mu\colon A\to \Z^+$ is a function, called the multiplicity function of $A$. We omit the multiplicity function from the notation and use $\mu$ to denote the multiplicity function of all multisets. We consider multisets of discrete valuations of the field $L$ and attach certain subrings and ideals to them.

\begin{definition}
    \label{def:multisetRing}
    Let $L$ be a field and let $V$ be a  multiset of discrete valuations of $L$. 
    \begin{enumerate}
        \item Define the {\it ring associated to} $V$ by
        \begin{align*}
            R_V \colonequals \bigcap_{v\in V} R_v \, .
        \end{align*}
        
        \item Define the {\it ideal associated to} $V$ by
        \begin{align*}
            I_V \colonequals \bigcap_{v\in V} \mf{m}_v^{\mu(v)} \, .
        \end{align*}
    \end{enumerate}
\end{definition}

\dvrrthm*

\begin{proof}
    The composition of the inclusion with the natural projection gives morphisms from $R_V \cap R_W$ to the quotients $R_V/I_V$ and $R_W/I_W$. Therefore, the map that takes an element in the intersection $f\in R_V\cap R_W$ and assigns the ordered pair $(f+I_V, f + I_W)$ is a ring homomorphism. Furthermore, its kernel is precisely $I_V\cap I_W$. In order to complete the proof, we need to show that this homomorphism is surjective. Take an element $(f_1 + I_V, f_2 + I_W)$, and define the positive integers 
    \begin{align*}
        N \colonequals \max_{v\in V}\{\mu(v), -v(f_2)\} \, \quad M \colonequals \max_{w\in W}\{\mu(w), -w(f_1)\} \, .
    \end{align*}
    Define $t_V \colonequals \prod_{v\in V} t_v$ and $t_W \colonequals \prod_{w\in W} t_w$ and let $f\colonequals f_1t_W^M +f_2t_V^N \in L$. Then for every $v\in V$ and $w\in  W$ we have the inequalities
    \begin{align*}
        \nu(f) \geq \min\{\nu(f_1), \nu(f_2) + N\} \geq 0 \, , \quad  w(f) \geq \min\{w(f_1) + M, w(f_2)\} \geq 0 \, .
    \end{align*}
    This implies that $f\in R_\nu\cap R_w$. By construction, $f + I_V = f_1 + I_V$ and $f + I_W = f_2 + I_W$. 
\end{proof}

\medskip
\section{The global field Euler function}
\label{sec:gfef}

The norm of a closed point in $X$ is the dimension of the residue field $k(P)$. One then defines the norm of an effective divisor by extending multiplicatively. When $K$ is a number field and $I$ is an integral ideal, the norm of $I$ coincides with the cardinality of the finite ring $\O_K/I$. When $K$ is a global function field over $\F_q$, the norm of an effective divisor $D$ is $q^{\deg D}$. In this section we introduce a new perspective that unifies the proofs of our results. 

\medskip
Note that every divisor $D\in \Div(X)$ corresponds to a finite multiset of closed points in $X$, where the multiplicity of each closed point $P$ is $\nu_P(D)$. Since $X$ is smooth, each closed point corresponds to a discrete valuation of $K$. Following Definition \ref{def:multisetRing}, for an effective divisor $D\in \Div^+(X)$, we denote by $\O_D$ the ring associated to the multiset\footnote{This approach was suggested to us by Guillermo Mantilla-Soler.} of valuations given by $D$. Similarly, $I_D$ denotes the ideal associated to the multiset of valuations given by $D$. Explicitly,
\begin{align*}
        \O_D \colonequals \bigcap_{P\in\, \Supp D} \O_{X,P} \, , \quad
        I_D \colonequals \bigcap_{P\in\, \Supp D} \mf{m}_P^{\nu_P(D)} \, .
    \end{align*}
\begin{definition}[Numerical norm]
\label{def:norm}
    Let $K$ be a global field. The {\it norm} of $K$ is the function $\msf{N}_K\colon \Div^+(X) \to \Z$ given by
    \[
    \msf{N}_K(D) \colonequals \begin{cases}
    1, & \text{ for }  D=0, \\
    \#(\O_D/I_D), & \text{ otherwise.}
    \end{cases}
    \]
\end{definition}

Applying Theorem \ref{thm:dvrCRT} to a global field $K$, we show that the perspective introduced in Definition \ref{def:norm} coincides with the usual definition.

\begin{corollary}[Properties of the norm]
    \label{cor:main}
    Let $K$ be a global field. Then, for every nonzero effective divisor $D \in \Div^+(X)$, we have the ring isomorphism
    \begin{align*}
        \O_D/I_D \cong \prod_{P \leq D} \O_{X,P}/\mf{m}_P^{\nu_P(D)} \, .
    \end{align*}Moreover,
    \begin{enumerate}
        \item[a)] When $K$ is a number field, and $I$ is the ideal $I \colonequals\prod_{P \leq D}P^{\nu_P(D)}$ the rings $\O_D/I_D$ and $\O_K/I$ are isomorphic.  In particular, $\nrm{K}{D} = \#(\O_K/I)$.
        \item[b)] When $K$ is a global function field over $\F_q$, $\nrm{K}{D} = q^{\deg D}$. 
    \end{enumerate}
    In particular, $\msf{N}_K \colon \Div^+(X) \to \Z$ is a completely multiplicative function on divisors.
\end{corollary}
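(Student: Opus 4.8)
The plan is to bootstrap everything from Theorem \ref{thm:dvrCRT} by induction on $\#\Supp D$. Write $\Supp D = \{P_1,\dots,P_r\}$ and set $n_i \colonequals \nu_{P_i}(D) > 0$. Since distinct closed points of $X$ determine distinct discrete valuations of $K$, we may take $V$ to be the singleton multiset $\{P_1\}$ with multiplicity $n_1$ and $W$ the multiset $\{P_2,\dots,P_r\}$ with multiplicities $n_2,\dots,n_r$; these are finite and disjoint, $R_V\cap R_W = \O_D$, and $I_V\cap I_W = I_D$. Theorem \ref{thm:dvrCRT} then gives
$$\O_D/I_D \;\cong\; \bigl(\O_{X,P_1}/\mf{m}_{P_1}^{n_1}\bigr)\times\bigl(R_W/I_W\bigr),$$
and $R_W/I_W \cong \prod_{i=2}^{r}\O_{X,P_i}/\mf{m}_{P_i}^{n_i}$ by the inductive hypothesis, the base case $r=1$ being a tautology. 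This is the asserted ring isomorphism; in particular $\O_D/I_D$ is finite as soon as each factor is, which justifies writing $\#(\O_D/I_D)$.

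Next I would record the elementary computation that for a discrete valuation ring $\O_{X,P}$ with finite residue field $k(P)$, the $\mf{m}_P$-adic filtration of $\O_{X,P}/\mf{m}_P^{n}$ has successive quotients $\mf{m}_P^{i}/\mf{m}_P^{i+1}$ which are one-dimensional $k(P)$-vector spaces, so that $\#\bigl(\O_{X,P}/\mf{m}_P^{n}\bigr) = \#k(P)^{n}$. Combined with the product decomposition, this yields
$$\nrm{K}{D} \;=\; \prod_{P\leq D}\#k(P)^{\nu_P(D)},$$
with the empty product giving $\nrm{K}{0}=1$. Complete multiplicativity of $\nrm{K}{-}$ on $\Div^{+}(X)$ is then immediate from this formula and the identity $\nu_P(D_1+D_2) = \nu_P(D_1)+\nu_P(D_2)$, with no coprimality needed.

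It remains to identify $\nrm{K}{D}$ with the classical quantities. For (a), when $K$ is a number field each $P_i$ is a nonzero prime $\mf{p}_i\subset\O_K$, $\O_{X,P_i} = (\O_K)_{\mf{p}_i}$, and since $\O_K/\mf{p}_i^{n_i}$ is already local, localization at $\mf{p}_i$ induces an isomorphism $\O_K/\mf{p}_i^{n_i}\cong\O_{X,P_i}/\mf{m}_{P_i}^{n_i}$. The ideals $\mf{p}_i^{n_i}$ are pairwise comaximal in the Dedekind domain $\O_K$, so the classical Chinese remainder theorem gives $\O_K/I \cong \prod_i \O_K/\mf{p}_i^{n_i}$, which is isomorphic to $\O_D/I_D$ by the first part; hence $\nrm{K}{D} = \#(\O_K/I)$. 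For (b), when $K$ is a global function field over $\F_q$, one has $\#k(P) = q^{\deg P}$ by definition of the degree of a closed point, so the formula for $\nrm{K}{D}$ above becomes $\prod_{P\leq D} q^{\nu_P(D)\deg P} = q^{\sum_{P}\nu_P(D)\deg P} = q^{\deg D}$.

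I do not expect a genuine obstacle: all the weight is carried by Theorem \ref{thm:dvrCRT}. The only points requiring a little care are the bookkeeping of multisets and multiplicities when unwinding the induction, and — in part (a) — the compatibility of localization with passing to the quotient, i.e.\ that $\O_K/\mf{p}^n$ coincides with its own localization at $\mf{p}$ because it is local with support at $\mf{p}$.
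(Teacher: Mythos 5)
Your proposal is correct and follows essentially the same route as the paper: the isomorphism comes from Theorem~\ref{thm:dvrCRT} (you make explicit the induction on $\#\Supp D$ that the paper leaves implicit, since the theorem only splits off two factors at a time), part~(a) uses the localization isomorphism $\O_K/\mf{p}^n \cong (\O_K)_{\mf{p}}/\mf{p}^n(\O_K)_{\mf{p}}$ together with the classical Chinese remainder theorem, and part~(b) reduces to $\#k(P) = q^{\deg P}$. Your write-up is in fact slightly more complete than the paper's, which for~(b) only spells out the case $D = P$ and defers the count $\#(\O_{X,P}/\mf{m}_P^n) = \#k(P)^n$ to the later proof of the product formula.
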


\begin{proof}
    The isomorphism follows from Theorem \ref{thm:dvrCRT} to this context. Item (a) follows from the isomorphism
    \begin{align*}
        \O_D/I_P^n = \O_D/\mf{m}_P^n \cong (\O_K)_P/P^n(\O_K)_P \cong \O_K/P^n \, ,
    \end{align*}
    for every maximal ideal $P\lhd \O_K$, and the Chinese remainder theorem. Item (b) follows from the observation that for a closed point $P\in X$, $\O_D = \O_{X,P}$ and $I_P = \mf{m}_P$, so that $\O_P/I_P$ is precisely the residue field of $P$.
\end{proof}

The author's first guess for the definition of the Euler function of a global field was the same from elementary number theory. In other words, we assumed that $\varphi_K(D)$ should count the number of effective divisors with norm not exceeding $\nrm{K}{D}$, which are coprime to $D$. Even though this approach works in the number field case, the reader may check that this naive Euler function is not even multiplicative in the case of $\P^1$. Inspired by the group theoretic definition from Section \ref{sec:intro}, Definition \ref{def:norm}, and Corollary \ref{cor:main}, we propose the following definition.

\begin{definition}[Global field Euler function]
    \label{def:gfeuler}
    Let $K$ be a global field. The {\it Euler function} of $K$ is the function $\varphi_K\colon \Div^+(X) \to \Z$ given by
    \[
    \varphi_K(D) \colonequals \begin{cases}
    1, & \text{ for }  D=0, \\
    \#(\O_D/I_D)^\times, & \text{ otherwise.}
    \end{cases}
    \]
\end{definition}

In particular, for every closed point $P\in X$, $\varphi_K(P) = \nrm{K}{P} -1$. By Corollary \ref{cor:main}, $\varphi_K$ is multiplicative. The product formula will follow from the calculation of the units in the ring $\O_{rP}/I_{rP}$ for $P\in X$ closed and $r\geq 1$.

\gfprodformula*

\begin{proof}
    Take a closed point $P\in X$, and let $r\geq 1$. Consider the divisor $D = rP$, and note that $\O_D = \O_{X,P}$ and $I_D = \mf{m}_P^r$. The ring $\O_D/I_D = \O_{X,P}/\mf{m}_P^r$ is local with maximal ideal $\mf{m}_P/\mf{m}_P^r$. Therefore, $(\O_{X,P}/\mf{m}_P^r)^\times = \O_{X,P}/\mf{m}_P^r - \mf{m}_P/\mf{m}_P^r$. From the exact sequence
    \begin{align*}
        0 \longrightarrow \mf{m}_P/\mf{m}_P^r \longrightarrow \O_{X,P}/\mf{m}_P^r \longrightarrow \O_{X,P}/\mf{m}_P \longrightarrow 0 \,,
    \end{align*}
    we deduce that $\#(\mf{m}_P/\mf{m}_P^r) = \nrm{K}{P}^{r-1}$, and $$\varphi_K(rP) = \nrm{K}{P}^r - \nrm{K}{P}^{r-1} = \nrm{K}{rP}(1-\nrm{K}{P}^{-1}).$$
    The result follows for general divisors from this calculation and Corollary \ref{cor:main}.
\end{proof}

\medskip
\section{Holomorphicity of the totient zeta function}
Just as the Weil zeta function of $K$ is the series associated to the norm $\msf{N}_K$
\begin{align*}
    \zeta_K(s) = \sum_{D\in\Div^+(X)} \nrm{K}{D}^{-s} \,, \quad \text{ when } \sigma > 1 \,,
\end{align*}
it is natural to consider the series associated to the Euler function $\varphi_K$. Following Bateman, we define the {\it totient zeta function} as the formal series
\begin{align*}
    T_K(s) \colonequals \sum_{D\in\Div^+(X)} \varphi_K(D)^{-s} \,.
\end{align*}
In this section we show that $T_K(s)$ converges absolutely and uniformly on $\sigma > 1$. This will follow from the convergence of $\zeta_K(s)$ and the convergence of an Euler product that appears spontaneously as a factor of $T_K(s)$.

\begin{lemma}
    Let $K$ be a global field. For each closed point $P\in X$, consider the function $F_P(s) \colonequals 1 + (\nrm{K}{P}-1)^{-s} - \nrm{K}{P}^{-s}$. The Euler product $\prod_P F_P(s)$ converges uniformly to an holomorphic function $f_K(s)$ on the open half plane $\sigma > 0$.
    \label{lemma:fprod}
\end{lemma}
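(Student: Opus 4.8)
The statement to prove is Lemma~\ref{lemma:fprod}: the Euler product $\prod_P F_P(s)$ with $F_P(s) = 1 + (\nrm{K}{P}-1)^{-s} - \nrm{K}{P}^{-s}$ converges uniformly to a holomorphic function on every compact subset of $\{\sigma > 0\}$. The natural strategy is the standard one for infinite products of holomorphic functions: write $F_P(s) = 1 + g_P(s)$ and show that $\sum_P |g_P(s)|$ converges uniformly on compact subsets of the half-plane, which by the Weierstrass theory of infinite products gives both the convergence to a holomorphic limit and the uniformity. So the whole game reduces to estimating $g_P(s) \colonequals (\nrm{K}{P}-1)^{-s} - \nrm{K}{P}^{-s}$ from above.

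**Key estimate.** Fix a compact $S \subseteq \{\sigma > 0\}$, so there are constants $0 < \sigma_0 \le \sigma \le \sigma_1$ and $|t| \le t_1$ on $S$; write $Q = \nrm{K}{P} \ge 2$. The point is that $(Q-1)^{-s} - Q^{-s}$ is small because $Q-1$ and $Q$ are close \emph{relative to their size}. I would write
\begin{align*}
    (Q-1)^{-s} - Q^{-s} = Q^{-s}\left[\left(1 - \tfrac{1}{Q}\right)^{-s} - 1\right],
\end{align*}
and then bound the bracket. For $|z| \le 1/2$ one has $|(1-z)^{-s} - 1| \le C(S)\,|z|$ for a constant depending only on the bound on $|s|$ over $S$ (this is just the mean value estimate / first-order Taylor bound for the entire function $z \mapsto (1-z)^{-s}$, using $|(1-z)^{-s-1}| \le 2^{\sigma_1+1}$ on $|z|\le 1/2$). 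Since all but finitely many closed points have $Q = \nrm{K}{P} \ge 3$, hence $1/Q \le 1/2$, this yields
\begin{align*}
    |g_P(s)| \le C(S)\, Q^{-\sigma} \cdot \tfrac{1}{Q} = C(S)\, \nrm{K}{P}^{-\sigma - 1} \le C(S)\, \nrm{K}{P}^{-\sigma_0 - 1}
\end{align*}
for all $s \in S$ and all but finitely many $P$. Then $\sum_P \nrm{K}{P}^{-\sigma_0-1}$ converges: indeed $\sum_P \nrm{K}{P}^{-u} \le \sum_{D \in \Div^+(X)} \nrm{K}{D}^{-u} = \zeta_K(u) < \infty$ for $u = \sigma_0 + 1 > 1$, which is exactly the convergence of the Weil zeta function recalled at the start of the section. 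Hence $\sum_P |g_P(s)|$ is uniformly bounded on $S$ by a convergent constant series, and uniform convergence of $\sum_P g_P$ on $S$ follows from the Weierstrass $M$-test with $M_P = C(S)\,\nrm{K}{P}^{-\sigma_0-1}$ (absorbing the finitely many exceptional points).

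**Conclusion.** Given uniform convergence of $\sum_P g_P(s)$ on compacta, the standard theorem on infinite products of holomorphic functions (e.g.\ Conway, \emph{Functions of One Complex Variable}, Ch.~VII) gives that $\prod_P (1 + g_P(s)) = \prod_P F_P(s)$ converges uniformly on compact subsets of $\{\sigma > 0\}$ to a holomorphic function $f_K(s)$. Since $S$ was an arbitrary compact subset of the half-plane, $f_K$ is holomorphic on all of $\{\sigma > 0\}$, as claimed.

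**Main obstacle.** There is no deep obstacle; the only thing requiring care is the elementary estimate $|(1-z)^{-s} - 1| \le C(S)|z|$ with a constant uniform over the compact set $S$ (one must track that the constant depends on $\sup_{s\in S}|s|$ and $\sup_{s \in S}\sigma$ but not on $P$), together with handling the finitely many closed points of norm $2$ separately so that $1/\nrm{K}{P} \le 1/2$ can be invoked. Everything else is the Weierstrass machinery plus the already-established convergence of $\zeta_K(s)$ for $\sigma > 1$.
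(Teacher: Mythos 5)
Your proposal is correct and follows essentially the same route as the paper: both reduce the lemma to the uniform bound $|F_P(s)-1|\ll \nrm{K}{P}^{-\sigma_0-1}$ on compacta, compare $\sum_P \nrm{K}{P}^{-\sigma_0-1}$ with $\zeta_K(\sigma_0+1)$, and invoke the standard Weierstrass theory of infinite products. The only cosmetic difference is how the key estimate is obtained — the paper writes $(x-1)^{-s}-x^{-s}=s\int_{x-1}^{x}z^{-s-1}\,\mathrm{d}z$ and bounds the integrand, while you factor out $Q^{-s}$ and use a first-order Taylor bound on $(1-1/Q)^{-s}-1$ — and these yield the same conclusion.
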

\begin{proof}
     We drop the subindex $K$ in the norm and Euler function for the sake of clarity. Fix $\delta > 0$ and let $U_\delta$ be the open half plane $\sigma>\delta$. We have the trivial lower bound $\nrm{}{D}\geq 2$ for every divisor nonzero effective divisor $D$. By choosing the principal branch of the logarithm, $F_P(s)$ is holomorphic in $U_\delta$. Thus $\left(F_P(s)\right)_P$ is a sequence of holomorphic functions on $U_\delta$. Fix $P$ and denote momentarily $x = \nrm{}{P}$. Then
     \begin{equation}
        | F_P(s) - 1 | = | (x-1)^{-s} - x^{-s} | = \Big| s\int_{x-1}^x z^{-s-1} \, \mathrm{d}z \Big| \leq |s|(x-1)^{-\sigma-1} \, . \label{ineq:FP}
     \end{equation}
    By Inequality $\ref{ineq:FP}$, we have that for every $P$,
    \begin{align*}
        |F_P(s) - 1| \ll \nrm{}{P}^{-\delta-1} \, ,
    \end{align*}
    uniformly on every compact subset of $U_\delta$. Since
    \begin{align*}
        \sum_P |F_P(s) -1| \ll \sum_P \nrm{}{P}^{-\delta-1} \leq \zeta_K(\delta + 1) < \infty\, , 
    \end{align*}
    we conclude that the infinite product $\prod_P F_P$ converges locally uniformly on $U_\delta$, for every $\delta>0$,  and therefore represents an holomorphic function $f_K(s)$ on the half plane $\sigma > 0$.
\end{proof}
The multiplicativity of $\varphi_K$ allows us to manipulate the formal series $T_K(s)$ to get a familiar function, of which we know the legitimacy of its convergence. From this informal idea we get the following theorem.

\gftotientzeta*

\begin{proof}
We drop the subindex $K$ in the norm and Euler function for the sake of clarity. Consider the following manipulation of formal series and products.

\begin{align*}
    T_K(s) &\colonequals \,  \sum_{D\geq 0} \varphi(D)^{-s} \\
    &= \, \prod_{P} \sum_{m=0}^\infty \varphi(mP)^{-s} \tag{By the multiplicativity of $\varphi$}\\
    &= \, \prod_{P}\Big(1+ \sum_{m=1}^\infty \Big[\nrm{}{P}^{m-1}(\nrm{}{P}-1)\Big]^{-s} \Big) \tag{By Theorem \ref{thm:prodformula}}. \\
    \intertext{Observe that the product converges for $\sigma > 1$, and therefore we have absolute convergence of $T_K(s)$ in this half plane.}
    &= \, \prod_{P}\Big(1+ (\nrm{}{P}-1)^{-s}\sum_{n=0}^\infty \nrm{}{P}^{-ns}\Big) \tag{Factor $(\msf{N}(P)-1)^{-s}$ and put $n=m-1$}\\
    &= \, \prod_{P}\Big(1+ (\nrm{}{P}-1)^{-s}(1-\nrm{}{P}^{-s})^{-1}\Big) \tag{Geometric series}\\
    &= \zeta_K(s) \prod_{P}\Big(1 + (\nrm{}{P}-1)^{-s} - \nrm{}{P}^{-s}\Big) \tag{Factoring the zeta function} \\
    &= \zeta_K(s) f_K(s) \, .
\end{align*}
By Lemma \ref{lemma:fprod}, we have that uniform convergence holds in $\sigma > 1$. Therefore, the rearrangements made are all justified in this domain, yielding the desired result. In particular, rewriting $T_K(s)$ as a Dirichlet series we get that the totient multiplicity arithmetical function $t_K(n)$ is well defined.
\end{proof}

\bigskip
\section{Mean value of global field totients}
In this section we replicate Bateman's argument to calculate the mean value of the totient multiplicity arithmetical function $t_\msf{K}(n)$ attached to the global field $K$. The proof of this result relies heavily on the holomorphicity of $T_K(s)$ and the Wiener-Ikehara theorem. The original references are \cite{ikehara1931extension} and \cite{wiener1932tauberian}. Also, the book \cite[Section 8.3]{montgomery2007multiplicative} contains a complete discussion of this result. We recall the statement here for convenience.

\begin{theorem}[Wiener-Ikehara]
    Let $(a_n)_{n=1}^\infty$ be a sequence of non negative real numbers such that the associated Dirichlet series $\sum a_n n^{-s}$ converges in the open half plane $\sigma > 1$. If there exists a constant $\alpha$ and a continuous function $h(s)$ on the closed half plane $\sigma \geq 1$ such that
    $$ h(s) = \left(\sum_{n=1}^\infty \frac{a_n}{n^s}\right) - \left(\frac{\alpha}{s-1}\right)\, $$
    on $\sigma > 1$, then
    $$ \lim_{N\to \infty} \, \frac{1}{N} \, \sum_{n=1}^N a_n \, = \, \alpha \, . $$
    \label{thm:wiener-ikehara}
\end{theorem}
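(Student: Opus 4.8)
The plan is to reduce the statement to the asymptotic equivalence $A(x) \colonequals \sum_{n\le x} a_n \sim \alpha x$ (note that $\frac{1}{N}\sum_{n=1}^N a_n = A(N)/N$), and to extract this from the analytic hypothesis by Fourier smoothing against a Fejér kernel, the non-negativity of the $a_n$ supplying the Tauberian input. First I would record two elementary reductions. Writing $f(s) = h(s) + \alpha/(s-1)$, the bound $A(x)x^{-\sigma} \le \sum_n a_n n^{-\sigma} = f(\sigma)$ for real $\sigma > 1$ shows $A(x) \ll_\delta x^{1+\delta}$ for every $\delta>0$; hence partial summation gives $f(s)/s = \int_1^\infty A(x)x^{-s-1}\,\mathrm{d}x$ for $\sigma>1$, and subtracting $\int_1^\infty \alpha x\cdot x^{-s-1}\,\mathrm{d}x = \alpha/(s-1)$ yields
\[
H(s) \colonequals \frac{f(s)}{s} - \frac{\alpha}{s-1} = \frac{h(s) - \alpha}{s} = \int_1^\infty (A(x) - \alpha x)\,x^{-s-1}\,\mathrm{d}x,
\]
where the middle term exhibits $H$ as continuous on the closed half plane $\sigma \ge 1$. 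The substitution $x = e^u$ and the notation $\phi(u) \colonequals A(e^u)e^{-u}$ (so $\phi\ge 0$, and $\phi(u)=0$ for $u<0$) turn this into $H(1+\varepsilon+it) = \int_0^\infty (\phi(u) - \alpha)e^{-\varepsilon u}e^{-itu}\,\mathrm{d}u$ for $\varepsilon>0$; that is, on the line $\sigma = 1+\varepsilon$ the function $H$ is, up to normalization, the Fourier transform of $(\phi - \alpha)e^{-\varepsilon u}$ on $[0,\infty)$.

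Next I would convolve with the Fejér kernel $k_\lambda(u) = \tfrac{\lambda}{\pi}\bigl(\tfrac{\sin\lambda u}{\lambda u}\bigr)^2 \ge 0$, whose Fourier transform is the triangle supported on $[-2\lambda,2\lambda]$ and which satisfies $\int_{-\infty}^{\infty} k_\lambda = 1$. For $\varepsilon>0$, Fubini gives
\[
\frac{1}{2\pi}\int_{-2\lambda}^{2\lambda} H(1+\varepsilon+it)\Bigl(1 - \frac{|t|}{2\lambda}\Bigr)e^{iyt}\,\mathrm{d}t = \int_0^\infty (\phi(u) - \alpha)\,e^{-\varepsilon u}k_\lambda(y-u)\,\mathrm{d}u .
\]
I would then let $\varepsilon \to 0^+$. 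On the left, continuity of $H$ on $\sigma\ge 1$ yields uniform convergence on the compact interval $[-2\lambda,2\lambda]$, so the left side tends to $I_\lambda(y) \colonequals \tfrac{1}{2\pi}\int_{-2\lambda}^{2\lambda} H(1+it)(1 - |t|/2\lambda)e^{iyt}\,\mathrm{d}t$. On the right I would split off the $-\alpha$ term, which converges by dominated convergence since $k_\lambda \in L^1$, and treat the remaining integral $\int_0^\infty \phi(u)e^{-\varepsilon u}k_\lambda(y-u)\,\mathrm{d}u$ by monotone convergence, its integrand being non-negative and non-decreasing as $\varepsilon\downarrow 0$; finiteness of the left-hand limit then forces this integral to be finite, and one obtains
\[
(k_\lambda * \phi)(y) = I_\lambda(y) + \alpha\int_{-\infty}^{y} k_\lambda(v)\,\mathrm{d}v .
\]
Since $H(1+it)(1-|t|/2\lambda)$ is integrable on $[-2\lambda,2\lambda]$, the Riemann--Lebesgue lemma gives $I_\lambda(y)\to 0$ as $y\to\infty$, while $\int_{-\infty}^y k_\lambda \to 1$; hence $(k_\lambda * \phi)(y)\to\alpha$ as $y\to\infty$, for every $\lambda>0$.

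Finally I would remove the smoothing, and this is where the Tauberian hypothesis enters. Because $u\mapsto e^u\phi(u)=A(e^u)$ is non-decreasing, for $u\in[y,y+h]$ one has $\phi(u)\ge e^{-h}\phi(y)$; restricting the convolution $(k_\lambda*\phi)(y+h/2)$ to this window and using $\phi\ge 0$ elsewhere gives $(k_\lambda*\phi)(y+h/2)\ge e^{-h}\phi(y)\int_{-h/2}^{h/2}k_\lambda$, so letting $y\to\infty$, then $\lambda\to\infty$ (using $\int_{-h/2}^{h/2}k_\lambda\to 1$), then $h\to 0^+$ yields $\limsup_{y\to\infty}\phi(y)\le\alpha$. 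The same inequality first shows $\phi$ is bounded, and a symmetric argument --- using $\phi(u)\le e^{h}\phi(y)$ on $[y-h,y]$, boundedness of $\phi$, and the concentration $\int_{|v|>h/2}k_\lambda\to 0$ as $\lambda\to\infty$ to control the tail of the kernel --- gives $\liminf_{y\to\infty}\phi(y)\ge\alpha$. Thus $A(x)/x=\phi(\log x)\to\alpha$, which is the claim. I expect the main obstacle to be the interchange $\varepsilon\to 0^+$: the right-hand integrand is not dominated uniformly in $\varepsilon$, because $k_\lambda$ decays only quadratically while $\phi$ may grow, so the passage must be reorganized around the non-negative part and monotone convergence as above. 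This is one of the two essential uses of $a_n\ge 0$; the other is the final windowing step, where the monotonicity of $A$ is exactly what prevents a spurious constant from appearing in front of $\alpha$.
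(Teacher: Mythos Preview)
The paper does not prove this theorem at all: it is quoted as a classical result, with citations to Ikehara, Wiener, and Montgomery--Vaughan, and is then applied as a black box in the proof of Theorem~D. So there is no ``paper's own proof'' to compare against.

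That said, your proposal is a correct and complete proof, and it is essentially the standard argument (the one found, for instance, in the Montgomery--Vaughan reference the paper cites). The reduction to the Laplace/Fourier representation $H(1+\varepsilon+it)=\int_0^\infty(\phi(u)-\alpha)e^{-\varepsilon u}e^{-itu}\,\mathrm{d}u$ is carried out carefully; the Fubini step is justified by the a~priori bound $A(x)\ll_\delta x^{1+\delta}$ you derived; the limit $\varepsilon\to 0^+$ is handled correctly by splitting off the non-negative part and invoking monotone convergence (this is indeed the delicate point, and you identify it); and the final ``windowing'' argument exploiting monotonicity of $A$ to pass from $(k_\lambda*\phi)(y)\to\alpha$ to $\phi(y)\to\alpha$ is the standard Tauberian step. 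One minor remark: in the $\liminf$ half you use boundedness of $\phi$, which does not follow from $A(x)\ll x^{1+\delta}$ alone, but it does follow (as you note) once the $\limsup$ half gives $\limsup\phi\le\alpha$, since $\phi$ is then bounded at infinity and is a locally bounded step function on any compact interval. Nothing is missing.
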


\begin{observation}
    \label{obs:WIthm}
    In particular, the hypothesis of the theorem are satisfied when $\sum a_n n^{-s}$ is meromorphic in $\sigma \geq 1$ with a unique (simple) pole at $s=1$ of residue $\alpha$. 
\end{observation}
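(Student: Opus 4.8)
\textbf{Proof plan for Observation \ref{obs:WIthm}.}
The plan is to reduce the stated meromorphy hypothesis directly to the hypothesis of Theorem \ref{thm:wiener-ikehara}, namely the existence of a continuous function $h(s)$ on the closed half plane $\sigma \geq 1$ with $h(s) = \left(\sum_{n=1}^\infty a_n n^{-s}\right) - \alpha/(s-1)$ on $\sigma > 1$. So suppose $D(s) \colonequals \sum_{n=1}^\infty a_n n^{-s}$ converges on $\sigma > 1$ and extends to a meromorphic function on the open set containing the closed half plane $\sigma \geq 1$, whose only pole there is a simple pole at $s = 1$ with residue $\alpha$. First I would define $h(s) \colonequals D(s) - \alpha/(s-1)$ on $\sigma > 1$; by construction this agrees with the Dirichlet series minus the principal part required in the theorem.

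Next I would argue that $h(s)$ extends continuously (indeed holomorphically) to $\sigma \geq 1$. The key point is that subtracting the principal part $\alpha/(s-1)$ of the Laurent expansion of $D$ at $s = 1$ removes the only singularity of $D$ on the closed half plane: $D(s) - \alpha/(s-1)$ is holomorphic in a punctured neighborhood of $s=1$ and bounded there (since the Laurent tail is holomorphic at $s=1$), hence by Riemann's removable singularity theorem it extends holomorphically across $s=1$. At every other point of $\sigma \geq 1$ the function $\alpha/(s-1)$ is already holomorphic, so $h = D - \alpha/(s-1)$ inherits the holomorphy of $D$ there. Thus $h$ is holomorphic on an open neighborhood of $\{\sigma \geq 1\}$, in particular continuous on $\{\sigma \geq 1\}$, which is exactly what Theorem \ref{thm:wiener-ikehara} demands. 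Applying that theorem gives $\lim_{N\to\infty} \frac{1}{N}\sum_{n=1}^N a_n = \alpha$, so the hypotheses are indeed satisfied in this case.

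I do not expect a serious obstacle here; this is essentially an unwinding of definitions together with the removable singularity theorem. The one point that deserves a word of care is that the phrase ``meromorphic in $\sigma \geq 1$'' should be read as meromorphic on some open set containing the closed half plane (so that holomorphy/continuity up to the boundary line $\sigma = 1$ makes sense), and that ``unique simple pole at $s=1$'' is what guarantees the subtracted term $\alpha/(s-1)$ is precisely the principal part — if the pole were of higher order, a single term $\alpha/(s-1)$ would not suffice to regularize $D$ and the reduction would fail. With that reading, the observation follows immediately and no computation is required.
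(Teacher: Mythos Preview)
Your argument is correct: subtracting the principal part $\alpha/(s-1)$ removes the unique singularity, and Riemann's removable singularity theorem gives a holomorphic (hence continuous) extension of $h$ to the closed half plane, which is exactly the hypothesis of Theorem~\ref{thm:wiener-ikehara}. The paper does not actually supply a proof of this observation---it is stated as a remark and left to the reader---so there is nothing to compare against; your write-up simply fills in the routine details the authors chose to omit, and your caveat about reading ``meromorphic in $\sigma\geq 1$'' as ``meromorphic on an open neighborhood of the closed half plane'' is the right way to make the statement precise.
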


\gftotients*

\begin{proof}
By Theorem \ref{thm:totientzeta}, we have the equality
\begin{align*}
    \sum_{m=1}^\infty \frac{t_\msf{K}(m)}{m^s} \, = \, \zeta_\msf{K}(s)f_\msf{K}(s) \, .
\end{align*}
A direct calculation gives $f_\msf{K}(1) = \zeta_\msf{K}(2)\zeta_\msf{K}(3)/\zeta_\msf{K}(6)$. Since $f_\msf{K}(s)$ is holomorphic on $\sigma>0$ and $\zeta_\msf{K}(s)$ has a unique (simple) pole at $s=1$, we have that $T_\msf{K}(s)$ is meromorphic on $\sigma\geq 1$ and has a unique (simple) pole at $s=1$ with residue
\begin{align*}
    \mathrm{Res}_{s=1}\Big[\, T_\msf{K}(s)\,\Big] = \lim_{s\to 1} (s-1)T_\msf{K}(s) = \lim_{s\to 1} (s-1)\zeta_\msf{K}(s)f_\msf{K}(s) = \rho_\msf{K} f_\msf{K}(1) \, .
\end{align*}

As we remarked in Observation \ref{obs:WIthm}, the hypothesis of Weiner-Ikehara theorem are satisfied in this case, so we conclude that 
\begin{align*}
    \lim_{N\to \infty} \dfrac{1}{N} \, \sum_{m=1}^N t_\msf{K}(m) \, = \dfrac{\rho_\msf{K} \, \zeta_\msf{K}(2)\zeta_\msf{K}(3)}{\zeta_\msf{K}(6)} \, .
\end{align*}
\end{proof}


\bibliography{refs}{}

\providecommand{\bysame}{\leavevmode\hbox to3em{\hrulefill}\thinspace}
\providecommand{\MR}{\relax\ifhmode\unskip\space\fi MR }
\providecommand{\MRhref}[2]{%
  \href{http://www.ams.org/mathscinet-getitem?mr=#1}{#2}
}
\providecommand{\href}[2]{#2}
\begin{thebibliography}{Wie32}

\bibitem[AK13]{altman2013term}
Allen Altman and Steven Kleiman, \emph{A term of commutative algebra},
  Worldwide Center of Mathematics, 2013.

\bibitem[Bat72]{bateman1972distribution}
Paul~T. Bateman, \emph{The {D}istribution of {V}alues of the {E}uler
  {F}unction}, Acta Arithmetica \textbf{21} (1972), no.~1, 329--345.

\bibitem[Dre70]{dressler1970density}
Robert Dressler, \emph{A {D}ensity {W}hich {C}ounts {M}ultiplicity}, Pacific
  Journal of Mathematics \textbf{34} (1970), no.~2, 371--378.

\bibitem[Erd45]{erdos1945some}
Paul Erd\H{o}s, \emph{Some remarks on {E}uler's $\phi$ function and some
  related problems}, Bulletin of the American Mathematical Society \textbf{51}
  (1945), no.~8, 540--544.

\bibitem[Ike31]{ikehara1931extension}
Shikao Ikehara, \emph{An extension of {L}andau's theorem in the analytical
  theory of numbers}, Journal of Mathematics and Physics \textbf{10} (1931),
  no.~1-4, 1--12.

\bibitem[MV07]{montgomery2007multiplicative}
Hugh~L. Montgomery and Robert~C. Vaughan, \emph{Multiplicative number theory i:
  Classical theory}, vol.~97, Cambridge University Press, 2007.

\bibitem[Wie32]{wiener1932tauberian}
Norbert Wiener, \emph{Tauberian {T}heorems}, Annals of Mathematics (1932),
  44--45.

\end{thebibliography}
\bibliographystyle{amsalpha}


\end{document}